\documentclass[11pt,a4paper]{article}

\usepackage{inputenc}
\usepackage{amsmath}
\usepackage{bm}
\usepackage{bbold}
\usepackage{amsthm}

\usepackage{hyperref}

\setlength{\mathsurround}{1pt}

\title{A Recursive Equations Based Representation\\ for the $G/G/m$ Queue\thanks{Applied Mathematics Letters, 1994. Vol.~7, no.~3, pp.~73-78}
}

\author{Nikolai Krivulin\thanks{Faculty of Mathematics and Mechanics, St.~Petersburg State University, Bibliotechnaya Sq.2, Petrodvorets, St.~Petersburg, 198904 Russia}}

\date{}

\newtheorem{theorem}{Theorem}
\newtheorem{lemma}[theorem]{Lemma}

\begin{document}

\maketitle

\begin{abstract}
New recursive equations designed for the $  G/G/m  $ queue are presented.
These equations describe the queue in terms of recursions for the arrival and
departure times of customers, and involve only the operations of maximum,
minimum and addition.
\\

\textit{Key-Words:} multi-server queues, recursive equations, ordering operator.
\end{abstract}

\section{Introduction}

As a representation of dynamics of queueing systems, recursive equations have
been introduced by Lindley in his classical investigation of the $  G/G/1  $
queue \cite{1}. In the last few years the representations based on recursive
equations have been extended to a variety of systems which consist of
single-server queues and operate according the first--come, first--served
(FCFS) discipline. Specifically, there are the equations designed for open and
close tandem queues with both infinite and finite buffers \cite{2,3,4}.
Recursive equations have been also derived to represent more complicated
systems of $  G/G/1  $ queues, including acyclic fork-join networks
\cite{5,6} and closed networks with a general deterministic routing mechanism
\cite{7,8,9}.

Recursive equations find a wide application in recent works on both analytical
study and simulation of queueing systems. As an analytical tool, they were
exploited to investigate system performance measures \cite{2,7,10} and
estimates of their gradients \cite{7,8,9,11}. In \cite{5,6} recursive equations
based representations have provided the means for establishing the stability
conditions and deriving bounds on system performance in a class of queueing
systems. Finally, these representations made it possible to develop efficient
algorithms of queueing systems simulation \cite{3,4} as well as powerful
methods of estimating gradients of system performance measures \cite{7,9,11}.

In the queueing theory, the $  G/G/m  $ queue is normally represented by
using the recursions introduced by Kiefer and Wolfowitz in \cite{12}.
Expressed in rather general terms, these recursions may be inconvenient to use
if an explicit form of representation is required. The purpose of this paper
is to present new recursive equations designed for the $  G/G/m  $ queue.
These equations describe the queue in terms of the arrival and departure times
of customers, and involve only the operations of maximum, minimum and
addition.

The rest of the paper is organized as follows. In Section~2 we briefly
describe both the $  G/G/1  $ and $  G/G/m  $ queues, and give preliminary
analysis of their representations. Section~3 includes two technical lemmas
which offer useful representations for the ordering operator. Finally, in
Section~4 we present our main result providing an explicit recursive
representation for the $  G/G/m  $ queue.

\section{Definitions and Preliminary Analysis}

Recursive equations as a representation of queueing systems were originally
expressed in terms of waiting times \cite{1,12}. Equations of this classical
type continue in use (see, e.g., \cite{10}); however in many recent works
\cite{2,3,4,5,6,7,8,9,11} one can find the equations describing system dynamics through
recursions for arrival and departure times. The last approach to the
representation of queues may be considered as a useful generalization of the
classical one, and it will be applied below to derive recursive equations for
the $  G/G/m  $ queue.

We start with the equations for the $  G/G/1  $ queue, which provide the
basis for representing more complicated systems including the $  G/G/m  $
queue. To set up these equations, consider a single server queue with infinite
buffer capacity and the FCFS queue discipline. Denote the interarrival time
between the $k$th customer and his predecessor by $  \alpha_{k}  $, and the
service time of the $k$th customer by $  \tau_{k} $. We assume
$  \alpha_{k} \geq 0  $ and $  \tau_{k} > 0 $, for any
$  k = 1, 2, \ldots $ Furthermore, let $  A_{k}  $ be the $k$th arrival
epoch to the queue, and $  D_{k}  $ be the $k$th departure epoch from the
queue. As is customary, the sequences $  \{\alpha_{k}\}_{k \geq 1}  $ and
$  \{\tau_{k}\}_{k \geq 1}  $ are assumed to be given, whereas
$  \{A_{k}\}_{k \geq 1}  $ and $  \{D_{k}\}_{k \geq 1}  $ are considered
unknown. Finally, provided that the queue starts operating at time zero, it is
convenient to set $  A_{k} \equiv 0  $ and $  D_{k} \equiv 0  $ for all
$  k \leq 0 $.

Using these notations, the recursive equations for the $  G/G/1  $ queue may
be written as \cite{2,3,4}
\begin{equation}
\begin{split}
A_{k} & = A_{k-1} + \alpha_{k} \\
D_{k} & = (A_{k} \vee D_{k-1}) + \tau_{k},
\end{split}
\label{tag1}
\end{equation}
where $  \vee  $ denotes the maximum operator, $  k=1,2, \ldots $

The first equation in (\ref{tag1}) is trivial. To understand the second
recursion it is sufficient to see that the term $  A_{k} \vee D_{k-1}  $
determines the service initiation time for the $k$th customer. Clearly, the
service of this customer may be initiated not earlier than he arrives at the
server. If the $k$th customer finds the server busy, he has to wait until
service of the $(k-1)$st customer completes. In other words, the time of the
$k$th initiation of service coincides with the maximum out of $  A_{k}  $
and $  D_{k-1}  $.

Taking (\ref{tag1}) as the starting point, we now turn to the discussion of
multi--server queues. Consider a queueing system with an infinite buffer and
$  m $, $  1 \leq m < \infty $, servers operating in parallel. When a
customer arrives, he occupies any one of those servers which are not busy. If
there are no free servers at his arrival, the customer takes his place in the
buffer and starts waiting to be served. His service begins as soon as any one
of the servers is unoccupied, provided that all his predecessors have left
the buffer.

We may extend all the definitions introduced above to this queueing system.
Note, however, that in the multi--server queue the $k$th departure time may
not coincide with the completion time of the $k$th customer. One can consider
the first customer as an example. Being the first to initiate service, he may
prove to be the $k$th to depart from the system, for any $  k \geq 1 $. To
take account of the distinction between these times, we further introduce the
notation $  C_{k}  $ for the completion time of the $k$th customer,
$  k=1,2, \ldots $

Since upon their service completions the customers immediately leave the
system, it is easy to see that both the sequences of completion times and
departure times are constituted by the same elements. In fact, the sequence
$  \{D_{k}\}_{k \geq 1}  $ is simply the sequence
$  \{C_{k}\}_{k \geq 1}  $ arranged in ascending order. Therefore, an
operator which produces ordered values is required to express the relation
between these sequences.

The idea to apply some ordering operator in recursive equations based
representations of the $  G/G/m  $ queue had its origin in \cite{12}.
However, this operator, as it has been introduced in \cite{12} and occurs in
recent works (see, e.g., \cite{6}), is expressed in general terms rather than
in an explicit form. It will be shown in the next section how the ordering
operator may be represented in closed form as a function of values being
ordered.

\section{Representations of the Ordering Operator}

Let $  \{r_{j}\}_{j = 1}^{n} = \{ r_{1}, \ldots, r_{n} \}  $ be a finite
set (sequence) of real numbers. Suppose that we arrange its elements in order
of increase, and denote the $k$th smallest element by $  r_{(k)}^{n} $. If
there are elements of an equal value, we count them repeatedly in an arbitrary
order. Finally, we introduce the notation $  \wedge  $ for the minimum
operator, and set $  r_{(k)}^{n} = -\infty $ for all $  k \leq 0 $.

\begin{lemma}
For each $  k = 1, \ldots, n  $, the value of
$  r_{(k)}^{n}  $ is given by
\begin{equation}
r_{(k)}^{n} = \bigwedge_{1 \leq j_{1} < \cdots < j_{k} \leq n}
(r_{j_{1}} \vee \cdots \vee r_{j_{k}}).
\label{tag2}
\end{equation}
\end{lemma}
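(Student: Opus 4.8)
The plan is to show the two-sided inequality by interpreting both sides combinatorially. Fix $k$ and write $M = \bigwedge_{1 \leq j_1 < \cdots < j_k \leq n}(r_{j_1} \vee \cdots \vee r_{j_k})$ for the right-hand side of \eqref{tag2}. Every $k$-element subset $\{j_1, \ldots, j_k\}$ contributes the term $r_{j_1} \vee \cdots \vee r_{j_k}$, which is the largest element of that subset; taking the minimum over all such subsets picks out the subset whose maximum is smallest.

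First I would prove $M \leq r_{(k)}^n$. Consider the particular subset consisting of the indices of the $k$ smallest elements $r_{(1)}^n, \ldots, r_{(k)}^n$. The maximum over this subset is exactly $r_{(k)}^n$, so $M$, being a minimum over all subsets including this one, is at most $r_{(k)}^n$.

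Next I would prove $M \geq r_{(k)}^n$, i.e.\ that every $k$-element subset has maximum at least $r_{(k)}^n$. Take an arbitrary subset $\{j_1 < \cdots < j_k\}$. Among its $k$ elements, at least one must have rank $\geq k$ in the sorted order of the full sequence: if all $k$ of them had rank $\leq k-1$, they would all lie among the $k-1$ smallest elements, which is impossible since there are $k$ distinct indices (here the convention of counting equal values repeatedly, stated before the lemma, is what makes ``rank'' well defined and keeps the pigeonhole clean). Hence the largest of $r_{j_1}, \ldots, r_{j_k}$ has rank $\geq k$, so $r_{j_1} \vee \cdots \vee r_{j_k} \geq r_{(k)}^n$. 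Taking the minimum over all subsets gives $M \geq r_{(k)}^n$, and combining the two inequalities yields equality.

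The only delicate point is the pigeonhole step when there are ties: one must be careful that ``the $k$ smallest elements'' refers to $k$ index positions in the sorted multiset, not $k$ distinct values, and that each chosen index $j_i$ occupies a distinct position. With the repeated-counting convention this is automatic, so I expect no real obstacle; the argument is essentially a one-line min--max observation dressed up with bookkeeping for repeated values. If one prefers to avoid the pigeonhole phrasing entirely, an alternative is induction on $n$: removing the largest element $r_{(n)}^n$ reduces the subsets not containing its index to the case of $n-1$ elements and rank $k$, while those containing it contribute a max of $r_{(n)}^n \vee r_{(k-1)}^{n-1} = r_{(k)}^n$ when $k = n$ — but the direct argument above is cleaner, so that is the one I would write up.
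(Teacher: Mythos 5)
Your proof is correct. Note that the paper itself gives no proof of this lemma --- it simply cites reference [9] --- so there is no in-paper argument to compare against; your write-up in fact supplies what the paper omits. The two-sided inequality is the standard minimax characterization of the $k$th order statistic: the subset of indices realizing the $k$ smallest positions in the sorted multiset gives $M \leq r_{(k)}^{n}$, and the pigeonhole step (any $k$ distinct indices occupy $k$ distinct positions in the sorted order, so at least one position is $\geq k$) gives $M \geq r_{(k)}^{n}$. Your handling of ties via the repeated-counting convention --- working with index positions in the sorted multiset rather than distinct values --- is exactly the right bookkeeping. The closing aside about an inductive alternative is garbled (the case analysis there does not quite work as stated), but since you explicitly discard it in favor of the direct argument, it does not affect the validity of the proof you propose to write up.
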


The proof of this statement can be found in \cite{9}. Now suppose that a new 
element $  r_{n+1}  $ is added to $  \{ r_{j} \}_{j=1}^{n} $. For the 
expanded set $  \{r_{j}\}_{j = 1}^{n+1} $, we denote the $k$th smallest 
element by $  r_{(k)}^{n+1} $. The next lemma is intended to relate the 
ordered values of the set $  \{ r_{j} \}_{j=1}^{n+1} $ to those of
$  \{ r_{j} \}_{j=1}^{n} $.

\begin{lemma}
For each $  k = 1, \ldots, n $, it holds
\begin{equation}
r_{(k)}^{n+1} = r_{(k)}^{n} \wedge (r_{(k-1)}^{n} \vee r_{n+1}).
\label{tag3}
\end{equation}
\end{lemma}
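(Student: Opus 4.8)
The plan is to deduce (\ref{tag3}) from the closed-form representation (\ref{tag2}) of Lemma~1 rather than from a direct bookkeeping of the two sorted sequences, since (\ref{tag2}) converts the claim into an identity about $\vee$ and $\wedge$. Writing out (\ref{tag2}) for the expanded set gives $r_{(k)}^{n+1}=\bigwedge_{1\le j_{1}<\cdots<j_{k}\le n+1}(r_{j_{1}}\vee\cdots\vee r_{j_{k}})$, and the first step is to split the index sets $\{j_{1}<\cdots<j_{k}\}$ according to whether they contain the new index $n+1$ or not, so that the global minimum becomes the minimum of two partial minima.

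Next I would evaluate each partial minimum. The $k$-element subsets of $\{1,\dots,n+1\}$ that do not contain $n+1$ are exactly the $k$-element subsets of $\{1,\dots,n\}$, so the partial minimum over them equals $r_{(k)}^{n}$ by (\ref{tag2}) applied to $\{r_{j}\}_{j=1}^{n}$. A subset that does contain $n+1$ has the form $\{j_{1}<\cdots<j_{k-1}\}\cup\{n+1\}$ with $1\le j_{1}<\cdots<j_{k-1}\le n$, and its associated term is $(r_{j_{1}}\vee\cdots\vee r_{j_{k-1}})\vee r_{n+1}$; hence the second partial minimum is $\bigwedge_{1\le j_{1}<\cdots<j_{k-1}\le n}\bigl((r_{j_{1}}\vee\cdots\vee r_{j_{k-1}})\vee r_{n+1}\bigr)$.

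The key step is then the distributivity of $\vee$ over $\wedge$: for any finite family $\{x_{\alpha}\}$ and any $y$ one has $\bigwedge_{\alpha}(x_{\alpha}\vee y)=\bigl(\bigwedge_{\alpha}x_{\alpha}\bigr)\vee y$. Pulling $r_{n+1}$ out of the minimum in this way and invoking (\ref{tag2}) once more, now for the $(k-1)$st smallest element of $\{r_{j}\}_{j=1}^{n}$, rewrites the second partial minimum as $r_{(k-1)}^{n}\vee r_{n+1}$. Taking the minimum of the two partial minima yields $r_{(k)}^{n+1}=r_{(k)}^{n}\wedge(r_{(k-1)}^{n}\vee r_{n+1})$, which is exactly (\ref{tag3}).

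The only places I expect minor friction are the boundary case $k=1$ and the handling of repeated values. For $k=1$ the ``subset containing $n+1$'' is just $\{n+1\}$ itself, the empty $\vee$ of the $r_{j}$'s must be read as $-\infty$, and this is consistent with the convention $r_{(0)}^{n}=-\infty$, so that $r_{(k-1)}^{n}\vee r_{n+1}=r_{n+1}$ still holds; this should be checked explicitly. Equal values require no separate analysis, because (\ref{tag2}) already counts elements of equal value repeatedly. As a sanity check one can also confirm (\ref{tag3}) directly through the three cases $r_{n+1}\ge r_{(k)}^{n}$, $r_{(k-1)}^{n}\le r_{n+1}\le r_{(k)}^{n}$, and $r_{n+1}\le r_{(k-1)}^{n}$, which should reproduce the same value.
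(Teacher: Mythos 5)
Your proposal is correct and follows essentially the same route as the paper: expand $r_{(k)}^{n+1}$ via Lemma~1, split the $k$-subsets of $\{1,\dots,n+1\}$ by whether they contain $n+1$, identify the first partial minimum as $r_{(k)}^{n}$, and factor $r_{n+1}$ out of the second by distributivity to obtain $r_{(k-1)}^{n}\vee r_{n+1}$. Your explicit treatment of the $k=1$ boundary case via the convention $r_{(0)}^{n}=-\infty$ is a small addition the paper leaves implicit, but the argument is the same.
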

\begin{proof}
To derive (\ref{tag3}), we first apply Lemma~1 to represent
$  r_{(k)}^{n+1} $ in the form
$$
r_{(k)}^{n+1} = \bigwedge_{1 \leq j_{1} < \cdots < j_{k} \leq n+1}
(r_{j_{1}} \vee \cdots \vee r_{j_{k}}).
$$
The terms being minimized may be rearranged to write this relation as
$$
r_{(k)}^{n+1} = \left( \bigwedge_{1 \leq j_{1} < \cdots < j_{k} \leq n}
                (r_{j_{1}} \vee \cdots \vee r_{j_{k}}) \right) \wedge
                \left( \bigwedge_{1 \leq j_{1} < \cdots < j_{k} = n+1}
(r_{j_{1}} \vee \cdots \vee r_{j_{k}}) \right).
$$
Clearly, the first term on the right-hand side of the above relation just
represents $  r_{(k)}^{n} $. It remains to factor out the common member
$  r_{j_{k}} = r_{n+1}  $ from the second term, and to apply Lemma~1 once
again so as to get (\ref{tag3}):
\begin{multline*}
r_{(k)}^{n+1} = r_{(k)}^{n} \wedge
                \left( \bigwedge_{1 \leq j_{1} < \cdots < j_{k-1} \leq n}
                (r_{j_{1}} \vee \cdots \vee r_{j_{k-1}}) \vee r_{n+1} \right)
\\                
              = r_{(k)}^{n} \wedge (r_{(k-1)}^{n} \vee r_{n+1}).
\qedhere              
\end{multline*}
\end{proof}

To conclude this section, note that the above representations of the ordering
operator involve only the operations of maximum and minimum, and appear to be
particularly suited for use in the recursive equations under discussion, which
are actually expressed in terms of similar operations.

\section{Recursive Equations for the $  G/G/m  $ Queue}

We are now in a position to present the main result of the paper. We start the
derivation of the equations representing the $  G/G/m  $ queue with the
observation that the arrival process in this queue system is no different from
that in the $  G/G/1 $. Therefore, the first equation at (\ref{tag1})
$$
A_{k} = A_{k-1} + \alpha_{k}
$$
remains unchanged.

Obviously, to calculate the completion time of the customer which is the $k$th
to arrive to the system, one has to add his service time $  \tau_{k}  $
to the time of his service initiation. Similarly as in the above analysis of
the $  G/G/1  $ queue, let us examine the possibilities for this customer to
initiate his service. Firstly, the customer may find one or more servers free
at his arrival. In this case he receives service immediately at the time
$  A_{k} $.

Suppose now that at the arrival of the $k$th customer all the servers are
found to be busy. If there are no other customers waiting for service, he
occupies that server which becomes free earlier. Clearly, the customers being
served unoccupy servers according to the sequence of the departure times
$  D_{k-m}, \ldots, D_{k-1} $. We may therefore conclude that, as the earliest
time in this sequence, $  D_{k-m} $ just represents the service initiation
time of the $k$th customer. It is easy to see that this conclusion remains the
same in case there are previous customers still waiting for service, when the
$k$th customer arrives. Finally, for the completion time of this customer, we
have
$$
C_{k} = (A_{k} \vee D_{k-m}) + \tau_{k}.
$$

As an important consequence of the above equation, one can state that
$  C_{k} > D_{k-m} $. By renaming the indices, this relation may be
rewritten as
\begin{equation}
D_{k} < C_{k+m}.
\label{tag4}
\end{equation}

To represent the $  G/G/m  $ queue completely, we have to define the
departure time $  D_{k}  $ by a suitable equation. One can conclude from the
discussion in Section~2 that $  D_{k}  $ coincides with the $k$th smallest
element of the set $  \{C_{j}\}_{j \geq 1} $. Moreover, as it follows from
(\ref{tag4}), it is sufficient to examine only the finite subset
$  \{C_{1}, \ldots, C_{k+m-1} \}  $ so as to determine $  D_{k} $. Let
$  C_{(k)}^{k+m-1}  $ be the $k$th smallest element of this set. We may now
define $  D_{k} = C_{(k)}^{k+m-1} $.

By applying (\ref{tag3}) and (\ref{tag4}), and taking into account that
$  D_{k-1} = C_{(k-1)}^{k+m-2} $, we successively get
\begin{multline*}
C_{(k)}^{k+m-1}
      = C_{(k)}^{k+m-2} \wedge ( C_{(k-1)}^{k+m-2} \vee C_{k+m-1} )
\\      
      = C_{(k)}^{k+m-2} \wedge ( D_{k-1} \vee C_{k+m-1} )
      = C_{(k)}^{k+m-2} \wedge C_{k+m-1}.
\end{multline*}
Finally, using (\ref{tag2}), we arrive at
$$
D_{k} = C_{(k)}^{k+m-1}
      = \bigwedge_{ 1 \leq j_{1} < \cdots < j_{k} \leq k+m-2 }
        ( C_{j_{1}} \vee \ldots \vee C_{j_{k}} ) \wedge C_{k+m-1}.
$$
We summarize the above results as follows.
\begin{theorem}
With the notations introduced in Section~2, the dynamics
of the $\ G/G/m  $ queue is defined by the recursive equations
\begin{equation}
\begin{split}
A_{k} & = A_{k-1} + \alpha_{k} \\
C_{k} & = (A_{k} \vee D_{k-m}) + \tau_{k} \\
D_{k} & = \bigwedge_{ 1 \leq j_{1} < \cdots < j_{k} \leq k+m-2 }
( C_{j_{1}} \vee \ldots \vee C_{j_{k}} ) \wedge C_{k+m-1},
\end{split}
\label{tag5}
\end{equation}
for each $  k = 1,2, \ldots $
\end{theorem}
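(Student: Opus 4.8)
The plan is to verify the three recursions of (\ref{tag5}) one at a time, in the order $A_k$, $C_k$, $D_k$, following the operational description of the queue given above and then converting the combinatorial expression that arises for $D_k$ into the stated closed form by means of Lemmas~1 and~2. The arrival recursion $A_k = A_{k-1} + \alpha_k$ needs no argument beyond the definition of $\alpha_k$ as the interarrival time between customers $k-1$ and $k$ (together with $A_k \equiv 0$ for $k \le 0$), and it is unaffected by the presence of several servers.

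For the second recursion I would show that the $k$th arriving customer begins service at the epoch $A_k \vee D_{k-m}$. He cannot start before $A_k$. If a server is idle at his arrival he starts at $A_k$, and then $A_k \ge D_{k-m}$; this case always occurs when $k \le m$, where the convention $D_{k-m} \equiv 0$ applies. If all $m$ servers are busy at his arrival, then under FCFS customer $k$ enters service only after all of $1, \dots, k-1$ have done so, and on whichever server frees first; the busy servers are held by customers among $1, \dots, k-1$ that have not yet completed, so a server becomes available to customer $k$ precisely once at least $k-m$ of them have departed. Since $\alpha_j \ge 0$ and $\tau_j > 0$, every customer with index $\ge k$ enters, hence completes, service only after customer $k$ starts, so the first $k-m$ departures of the system are departures of customers $1, \dots, k-1$, the $(k-m)$th of them occurring at $D_{k-m}$. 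The initiation epoch is therefore $A_k \vee D_{k-m}$, and adding $\tau_k$ gives $C_k = (A_k \vee D_{k-m}) + \tau_k$. As $\tau_k > 0$, this also gives $C_k > D_{k-m}$, which after renaming indices is (\ref{tag4}).

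To obtain the third recursion, recall from Section~2 that $\{D_k\}_{k \ge 1}$ is the ascending rearrangement of $\{C_k\}_{k \ge 1}$. Using (\ref{tag4}) one checks that each of $D_1, \dots, D_k$ occurs among $C_1, \dots, C_{k+m-1}$: if $D_i = C_j$ with $i \le k$, then $D_{j-m} < C_j = D_i$ together with the monotonicity of $\{D_k\}$ forces $j - m < i$, i.e.\ $j \le k+m-1$; hence $D_k = C_{(k)}^{k+m-1}$. Applying Lemma~2 with $n = k+m-2$, and using $D_{k-1} = C_{(k-1)}^{k+m-2}$, gives
\[
C_{(k)}^{k+m-1} = C_{(k)}^{k+m-2} \wedge \bigl( D_{k-1} \vee C_{k+m-1} \bigr),
\]
and since (\ref{tag4}) with index $k-1$ gives $D_{k-1} < C_{k+m-1}$, the bracket collapses to $C_{k+m-1}$, so that $C_{(k)}^{k+m-1} = C_{(k)}^{k+m-2} \wedge C_{k+m-1}$. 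Expanding $C_{(k)}^{k+m-2}$ by Lemma~1 yields the asserted formula for $D_k$ and completes the verification of (\ref{tag5}).

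The routine parts are the arrival equation and the Lemma~1/Lemma~2 manipulations. The genuine obstacle is making the derivation of the second equation and of (\ref{tag4}) rigorous: the assertions ``$D_{k-m}$ is the $(k-m)$th of the completions of customers $1, \dots, k-1$'' and ``$D_k$ depends only on $C_1, \dots, C_{k+m-1}$'' involve quantities that the recursion is itself defining, so a careful treatment must be cast as an induction on $k$ in which $A_k$, $C_k$, $D_k$ and the inequality (\ref{tag4}) are established \emph{simultaneously}. Pinning down at that level of detail which customers occupy the servers at each instant — in particular, confirming that FCFS really does place the $k$th customer on the server that frees first and accounting for customers still in the buffer at his arrival — is where the care must go.
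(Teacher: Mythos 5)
Your proposal is correct and follows essentially the same route as the paper: derive $C_k=(A_k\vee D_{k-m})+\tau_k$ from the order in which servers free up, deduce $D_k<C_{k+m}$, identify $D_k$ with $C_{(k)}^{k+m-1}$, and then collapse $C_{(k)}^{k+m-1}=C_{(k)}^{k+m-2}\wedge C_{k+m-1}$ via Lemma~2 and expand by Lemma~1. Your added justification that $D_1,\ldots,D_k$ all lie among $C_1,\ldots,C_{k+m-1}$, and your remark that a fully rigorous treatment would proceed by simultaneous induction on $k$, are refinements of steps the paper leaves informal.
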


In conclusion, let us consider two consequences which follow immediately from
the theorem. Clearly, if $  m=1  $, it results from the last equation at
(\ref{tag5}) that $  C_{k} \equiv D_{k}, \; k = 1,2, \ldots $ In this case, the
above equations are reduced to the recursions (\ref{tag1}) representing the
$  G/G/1  $ queue.

As another consequence of the obtained representation, one may produce the
equations describing the $  G/G/2  $ queue. Putting $  m=2  $ in
(\ref{tag5}), it is not difficult to arrive at
\begin{align*}
A_{k} & = A_{k-1} + \alpha_{k} \\
C_{k} & = (A_{k} \vee D_{k-2}) + \tau_{k} \\
D_{k} & = (C_{1} \vee \cdots \vee C_{k}) \wedge C_{k+1}.
\end{align*}

\bibliographystyle{utphys}

\bibliography{A_recursive_equations_based_representation_for_the_GGm_queue}

\end{document}